\newcommand{\df}{\dfrac}
\newcommand{\tf}{\tfrac}
\newcommand{\g}{\gamma}
\renewcommand{\(}{\left\(}
\renewcommand{\)}{\right\)}
\renewcommand{\[}{\left\[}
\renewcommand{\]}{\right\]}
\renewcommand{\i}{\infty}
\numberwithin{equation}{section}
\theoremstyle{plain}
\newtheorem{theorem}{Theorem}[section]
\newtheorem{lemma}[theorem]{Lemma}
\def\proof{\@ifnextchar[{\@oproof}{\@nproof}}
\def\@oproof[#1][#2]{\trivlist\item[\hskip\labelsep\textit{#2 Proof of\
		#1.}~]\ignorespaces}
\def\@nproof{\trivlist\item[\hskip\labelsep\textit{Proof.}~]\ignorespaces}
\def\@tocline#1#2#3#4#5#6#7{\relax
	\ifnum #1>\c@tocdepth 
	\else
	\par \addpenalty\@secpenalty\addvspace{#2}%
	\begingroup \hyphenpenalty\@M
	\@ifempty{#4}{%
		\@tempdima\csname r@tocindent\number#1\endcsname\relax
	}{%
		\@tempdima#4\relax
	}%
	\parindent\z@ \leftskip#3\relax \advance\leftskip\@tempdima\relax
	\rightskip\@pnumwidth plus4em \parfillskip-\@pnumwidth
	#5\leavevmode\hskip-\@tempdima
	\ifcase #1
	\or\or \hskip 1em \or \hskip 2em \else \hskip 3em \fi%
	#6\nobreak\relax
	\dotfill\hbox to\@pnumwidth{\@tocpagenum{#7}}\par
	\nobreak
	\endgroup
	\fi}
\begin{document}
	\title[On a function of Ramanujan  twisted by a logarithm]{On a function of Ramanujan  twisted by a logarithm}
	\author{Atul Dixit}
	\address{Department of Mathematics, Indian Institute of Technology Gandhinagar, Palaj, Gandhinagar, Gujarat-382355, India}
	\email{adixit@iitgn.ac.in}
	\author{Sumukha Sathyanarayana}
	\address{Department of Mathematics, Indian Institute of Technology Gandhinagar, Palaj, Gandhinagar, Gujarat-382355, India}
	\email{sumukhas@cuk.ac.in}
	\curraddr{Department of Mathematics, Central University of Karnataka, Kadaganchi, Kalaburagi, Karnataka 585367}
	\author{N. Guru Sharan }
	\address{Department of Mathematics, Indian Institute of Technology Gandhinagar, Palaj, Gandhinagar, Gujarat-382355, India}
	\email{gurusharan.n@iitgn.ac.in}
	\thanks{2020 \textit{Mathematics Subject Classification.} Primary 11F03; Secondary 33B15.\\
		\textit{Keywords and phrases.} Ramanujan's transformation, generalized digamma functions, modular relation.}
		\maketitle
	\begin{center}
		\dedicatory{\emph{Dedicated to Professors George Andrews and Bruce Berndt for their 85th birthdays}}
	\end{center}

	\begin{abstract}
		A two-term functional equation for an infinite series involving the digamma function and a logarithmic factor is derived. A modular relation on page 220 of Ramanujan's Lost Notebook as well as a corresponding recent result for the derivative of Deninger's function are two main ingredients in its derivation. An interesting  integral $\mathscr{H}(x)$, which is of independent interest, plays a prominent role in our functional equation. Several alternative representations for $\mathscr{H}(x)$ are obtained.
	\end{abstract}

\section{Introduction}\label{intro}
A Kronecker limit formula for a zeta function concerns evaluating the constant term in the Laurent series expansion around its pole, the simplest of which is the one for the Riemann zeta function determined by
\begin{equation}\label{zeta laurent}
	\zeta(s)=\frac{1}{s-1}+\gamma+\sum_{j=1}^{\infty}\frac{(-1)^j}{j!}\gamma_j(s-1)^j,
\end{equation} 
where, $\gamma_j, j\in\mathbb{N}\cup\{0\}$, are \emph{Stieltjes constants}, and $\gamma_0:=\gamma$ is the Euler constant. 

Kronecker limit formulas have been obtained for various zeta functions. Hecke was the first mathematician to derive it for the zeta function associated with an imaginary quadratic field. While he also obtained the corresponding formula for a real quadratic field, his result was not very explicit as it involved the logarithm of the Dedekind eta function $\eta(z)$. Later, Zagier \cite{zagier} obtained an explicit formula for real quadratic fields. Applications of Kronecker limit formulas include the solution to Pell's equation (the construction of a fundamental unit of a real quadratic field), finding class numbers of number fields, determining Hecke $L$-values etc.

Zagier's version of the Kronecker limit formula for a real quadratic field involved a curious analytic function $F(x)$, now known as the \emph{Herglotz-Zagier} function. For $x\in\mathbb{C}\backslash(-\infty,0]$, it is defined by
\begin{equation}\label{herglotzdef}
	F(x):=\sum_{n=1}^{\infty}\frac{\psi(nx)-\log(nx)}{n},
\end{equation} 
where $\psi(x):=\Gamma'(x)/\Gamma(x)$ is the logarithmic derivative of the Gamma function $\Gamma(x)$, also known as the digamma function. Since
\cite[p.~259, formula 6.3.18]{as}, for $|\arg$ $z|<\pi$, 
\begin{equation}\label{psiasymp}
	\psi(z) \sim \log z-\frac{1}{2z}-\frac{1}{12z^{2}}+\frac{1}{120z^{4}}-\frac{1}{252z^{6}}+\cdots,
\end{equation}
as
$z\to\infty$, we see that the series in \eqref{herglotzdef} converges absolutely.

The function $F(x)$ is interesting in its own right. It satisfies beautiful two- and three-term functional equations,
due to Zagier \cite[Equations (7.4), (7.8)]{zagier}, namely,
\begin{align*}
	&F(x)+F\left(\frac{1}{x}\right)=2F(1)+\frac{1}{2}\log^{2}(x)-\frac{\pi^2}{6x}(x-1)^2,\\
	&F(x)-F(x+1)-F\left(\frac{x}{x+1}\right)=-F(1)+\textup{Li}_2\left(\frac{1}{1+x}\right),
\end{align*}
with $F(1)$ being \cite[Equation (7.12)]{zagier}
\begin{equation*}
	F(1)=-\frac{1}{2}\g^2-\frac{\pi^2}{12}-\g_1,
\end{equation*}
and $\textup{Li}_{2}(z)$ is the well-known dilogarithm function.

The literature on Herglotz function and its various generalizations and analogues is vast; see \cite{kumar-choie}, \cite{hhf1}, \cite{dss2}, \cite{ishibashi}, \cite{masri}, \cite{raza}, and \cite{vz}.

On page 220 of the Lost Notebook \cite{lnb}, Ramanujan gave a beautiful two-term functional equation for a Herglotz-Zagier type function.
For $x\in\mathbb{C}\backslash(-\infty,0]$, let
\begin{align}
	\phi_0(x):=\psi(x)+\frac{1}{2x}-\log(x). \label{phi0 defn}
\end{align}
Then for Re$(\alpha)>, \textup{Re}(\beta)>0$ such that $\alpha \beta=1$, then
\begin{align}\label{w1.26}
	\sqrt{\alpha}\left\{ \sum_{n=1}^{\i}\phi_0(n\alpha)+ \frac{\gamma-\log(2\pi\alpha)}{2\alpha}\right\}
	&=\sqrt{\beta}\left\{\sum_{n=1}^{\i}\phi_0(n\beta)+\frac{\gamma-\log(2\pi\beta)}{2\beta}\right\}\nonumber\\
	&=-\df{1}{\pi^{3/2}}\int_0^{\i}\left|\Xi\left(\df{1}{2}t\right)\Gamma\left(\df{-1+it}{4}\right)\right|^2
	\df{\cos\left(\tf{1}{2}t\log\alpha\right)}{1+t^2}\, dt,
\end{align}
where $\Xi$ and $\xi$ are the Riemann's functions defined by
\begin{align*}
	\Xi(t)&:=\xi\left(\frac{1}{2}+it\right), \\
	\xi(s)&:=\frac{1}{2}s(s-1)\pi^{-\frac{s}{2}}\Gamma\left(\frac{s}{2}\right)\zeta(s).
\end{align*}
A two-term functional equation of the type $F(\alpha)=F(\beta)$, where $\alpha\beta=1$ with Re$(\alpha)>0$ and Re$(\beta)>0$, such as the one in \eqref{w1.26}, is also sometimes called a \emph{modular relation}.

The series and the integral occurring in identity \eqref{w1.26} satisfy striking properties and have nice applications. We now illustrate some of them. The first equality in \eqref{w1.26} can be used to obtain an asymptotic expansion of the series $\sum_{n=1}^{\infty}\phi_0(n\alpha)$ as $\alpha\to0$. See Oloa \cite[Section 4]{oloa}.
Regarding the integral in \eqref{w1.26}, Hardy \cite{ghh} said \textit{``the properties of this integral resemble those of one which Mr. Littlewood and I have used, in a paper to be published shortly in Acta Mathematica to prove that\footnote{There is a typo in this formula in that $\pi$ should not be present.}}
\begin{equation*}
	\int_{-T}^{T}\left|\zeta\left(\frac{1}{2}+ti\right)\right|^2\, dt \sim
	\frac{2}{\pi} T\log T\hspace{3mm}(T\to\infty)\textup{''}.
\end{equation*}
A remarkable Fourier transform equal to the integral in \eqref{w1.26}, also, given by Ramanujan \cite[Equation (22)]{riemann}, was recently used by Darses and Najnudel \cite{darses-najnudel} to obtain exact formulas for weighted $2k^{\textup{th}}$ moments of the Riemann zeta function, where $k\in\mathbb{N}$, in terms of an auto-correlation function $A(z)$ defined for $\textup{Re}(z)>0$ by
\begin{equation}\label{auto-correlation function}
A(z):=\int_{0}^{\infty}\left(\frac{1}{xz}-\frac{1}{e^{xz}-1}\right)	\left(\frac{1}{x}-\frac{1}{e^{x}-1}\right)\, dx.
\end{equation}
For $x\in\mathbb{C}\backslash(-\infty,0]$, consider Ramanujan's function $\sum_{n=1}^{\infty}\phi_0(nx)$ twisted by a logarithm, that is,
\begin{equation}\label{G of x}
	\phi_{\log}(x):=\sum_{n=1}^{\infty}\left(\psi(nx)+\frac{1}{2nx}-\log(nx)\right)\log(n\sqrt{x}).
\end{equation}
Using \eqref{psiasymp}, one can see that the above series converges absolutely for $x\in\mathbb{C}\backslash(-\infty,0]$.
 
The main result of our paper shows that $\phi_{\log}(x)$
satisfies a nice two-term functional equation. It also involves an integral $\mathscr{H}(x)$ defined for $x>0$ and $0<c<1$ by
\begin{align}
	\mathscr{H}(x)&:= \frac{1}{2 \pi i} \int_{(c)} \frac{\pi \zeta(w)  \zeta(1-w)}{\sin(\pi w)} \left( \pi \cot\left(\frac{\pi w}{2}\right) \right) x^{-w} \, dw, \label{H defn} 
\end{align}
where, here, and in the sequel, we have used the notation $\int_{(c)}$ to denote the line integral $\int_{c-i\infty}^{c+i\infty}$. Even though a closed-form evaluation of $\mathscr{H}(x)$ looks difficult, we derive several series or integral representations for it in Section \ref{repH}, namely, \eqref{1identity}, \eqref{2identity} and \eqref{2.5identity}.

Our main result is as follows.
\begin{theorem}\label{log-twist}
Let $\phi_{\log}(x)$ and $\mathscr{H}(x)$ be defined in \eqref{G of x} and \eqref{H defn} respectively. Define
$\mathscr{G}(x)$ by
\begin{align*}
	\mathscr{G}(x):=  \phi_{\log}(x)- \frac{1}{2} \mathscr{H}(x) -\frac{1}{48 x} ( 12\gamma^2 - 5\pi^2)   + \frac{1}{4x} \left(  \gamma\log(x) +\log(2\pi) \log(2\pi x) \right).
\end{align*}
Then, for any $\alpha, \beta>0$ such that $\alpha\beta=1$,
\begin{align}\label{modular log-twist}
	\sqrt{\alpha}\mathscr{G}(\alpha)=\sqrt{\beta}\mathscr{G}(\beta).
\end{align}
\end{theorem}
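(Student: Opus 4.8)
The plan is to differentiate Ramanujan's modular relation \eqref{w1.26} with respect to a suitable parameter and thereby manufacture the logarithmic twist appearing in $\phi_{\log}$. Write $\alpha=e^{u}$, $\beta=e^{-u}$, so that $\alpha\beta=1$ is automatic, and set $f(u):=\sqrt{\alpha}\left\{\sum_{n=1}^{\infty}\phi_0(n\alpha)+\tfrac{\gamma-\log(2\pi\alpha)}{2\alpha}\right\}$. Ramanujan's relation says $f(u)=f(-u)$, hence $f'(u)=-f'(-u)$, i.e.\ $f'$ is odd; but this alone produces $\sum n\alpha\,\phi_0'(n\alpha)$, not the $\log$-twisted series I want. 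Instead I would introduce a homogeneity parameter: replace $\alpha$ by $t\alpha$ and $\beta$ by $t\beta$ is not allowed since that breaks $\alpha\beta=1$, so the correct device is to exploit the scaling $n\mapsto$ fractional powers via a Mellin variable. Concretely, note that $\log(n\sqrt{x})=\tfrac12\log x+\log n$, and $\log n = \frac{d}{ds}\big|_{s=0} n^{s}$; thus
\begin{align*}
\phi_{\log}(x)=\frac{\log x}{2}\sum_{n=1}^{\infty}\phi_0(nx)+\frac{d}{ds}\Big|_{s=0}\sum_{n=1}^{\infty}\phi_0(nx)\,n^{s}.
\end{align*}
So the real engine is a one-parameter family $\Phi_s(x):=\sum_{n=1}^{\infty}\phi_0(nx)\,n^{s}$, and $\phi_{\log}$ is $\tfrac{\log x}{2}\Phi_0 + \partial_s\Phi_s|_{s=0}$.

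The next step is to obtain a modular-type transformation for $\Phi_s(x)$ itself, for $s$ in a strip, and then differentiate in $s$ at $s=0$. I would do this by a Mellin-transform / contour-shift argument in the spirit of \eqref{w1.26}: the Mellin transform of $\phi_0$ is expressible through $\zeta$-values (using \eqref{psiasymp} to control the asymptotics and the known Mellin transform of $\psi(x)+\tfrac{1}{2x}-\log x$), so that
\begin{align*}
\sum_{n=1}^{\infty}\phi_0(nx)\,n^{s}=\frac{1}{2\pi i}\int_{(c)} M(w)\,\zeta(w-s)\,x^{-w}\,dw
\end{align*}
for an explicit $M(w)$ involving $\Gamma$- and $\zeta$-factors. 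Applying the functional equation of $\zeta$ and shifting the contour $w\mapsto 1-w$ (equivalently $x\mapsto 1/x$) yields a relation of the form $x^{s/2+\text{something}}\Phi_s(x)=$ (same with $x\to 1/x$) plus residual terms coming from the poles crossed — poles at $w=0,1$ from $\Gamma$ and $\zeta$, and crucially a \emph{double} pole whose residue, upon differentiating in $s$, spawns the $\gamma^2$, $\pi^2$, $\gamma\log x$ and $\log(2\pi)\log(2\pi x)$ terms collected into $\mathscr{G}$. The integral $\mathscr{H}(x)$ in \eqref{H defn} is exactly what remains of the shifted contour integral at $s=0$ after $\partial_s$ acts on the $\zeta(w-s)$ inside — indeed $\partial_s\zeta(w-s)|_{s=0}=-\zeta'(w)$, and combined with the reflection $\zeta(1-w)$ and the $\cot(\pi w/2)$ coming from the even-part projection one recognizes the integrand of $\mathscr{H}$; this is why the theorem quotes "a corresponding recent result for the derivative of Deninger's function" as a second ingredient, and I would invoke that result to identify the derivative piece cleanly rather than re-deriving it.

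I would then assemble: take Ramanujan's \eqref{w1.26}, apply $\tfrac{\log x}{2}\times$ to it (this transforms $\sqrt\alpha\{\cdots\}=\sqrt\beta\{\cdots\}$ into something with $\log\alpha$ and $\log\beta=-\log\alpha$, contributing the $\Gamma,\Xi$-integral with an extra $\log$ weight), add the $\partial_s$-derivative relation for $\Phi_s$, and verify that all the non-$\phi_{\log}$ terms reorganize precisely into $-\tfrac12\mathscr{H}(x)-\tfrac{1}{48x}(12\gamma^2-5\pi^2)+\tfrac{1}{4x}(\gamma\log x+\log(2\pi)\log(2\pi x))$ on each side, so that the symmetric combination is exactly $\sqrt\alpha\,\mathscr{G}(\alpha)=\sqrt\beta\,\mathscr{G}(\beta)$. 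The main obstacle I anticipate is the bookkeeping of the residues at the double pole after differentiating in $s$: one must track $\zeta'(0)=-\tfrac12\log(2\pi)$, the Laurent data $\gamma$, $\gamma_1$ from \eqref{zeta laurent}, and the $\Gamma$-factor's expansion simultaneously, and show the $\gamma_1$-contributions cancel (they must, since $\mathscr{G}$ contains no $\gamma_1$) — this cancellation is the delicate check. A secondary technical point is justifying the interchange of $\partial_s$ with the series and with the contour integral, and confirming the contour shift is legitimate uniformly for small $s$ (controlling $\zeta(w-s)$ and the $\Gamma$-decay on vertical lines), but that is routine given the rapid decay of the $\Gamma$-factors.
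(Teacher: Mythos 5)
Your reduction $\phi_{\log}(x)=\tfrac{\log x}{2}\sum_n\phi_0(nx)+\partial_s\big|_{s=0}\sum_n\phi_0(nx)\,n^{s}$ is correct, but the engine you propose --- a modular-type transformation for $\Phi_s(x)=\sum_n\phi_0(nx)\,n^{s}$ valid for $s$ in a strip, to be differentiated at $s=0$ --- does not exist; the paper says so explicitly in the introduction (no two-term functional equation is known for $\sum_{n}n^{-s}\bigl(\psi(nx)+\tfrac{1}{2nx}-\log n\bigr)$ with $s$ complex), and this is precisely why the authors call the theorem surprising. The Mellin/contour-shift derivation you sketch fails at the reflection step: the kernel is (up to residue corrections) $-\pi\zeta(1-w)\zeta(w-s)/\sin(\pi w)$, and after $w\mapsto 1-w$ and the functional equation of $\zeta$ one is left with $\zeta(w)\zeta(w+s)$ times Gamma and trigonometric factors --- a product of zetas at arguments differing by $s$, which unfolds into a divisor-type Dirichlet series $\sum_N\sigma_{-s}(N)N^{-w}$ rather than back into $\Phi_s(1/x)$. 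Self-reciprocality of the kernel holds only at $s=0$, so there is no one-parameter family of identities available to differentiate.

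What actually makes the theorem provable is working directly at the level of the $s$-derivative, i.e., with $\zeta'(w)$ already inside the Mellin integrand, and transforming $\zeta'(1-w)/\zeta(1-w)$ via the logarithmic derivative of the functional equation, $\zeta'(1-w)/\zeta(1-w)=\log(2\pi)+\tfrac{\pi}{2}\tan(\tfrac{\pi w}{2})-\psi(w)-\zeta'(w)/\zeta(w)$. The $-\psi(w)$ correction term is the reason the Deninger-function ($\psi_1$) result must enter structurally: the paper first proves an intermediate modular relation for $\sum_n\phi_1(nx)-\sum_n\log(nx)\phi_0(nx)+\tfrac12\mathscr{H}(x)-\tfrac{\pi^2}{12x}$ by exactly this mechanism (the $\tan$ and $\cot$ corrections producing $\mathscr{H}$ and an auxiliary integral $\mathscr{J}$ that is itself modular), and then obtains Theorem \ref{log-twist} by subtracting this from the known relation \eqref{k=1}, which eliminates the $\psi_1$-series. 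You do name the Deninger-derivative result as an ingredient, but your outline gives it no such role; as written, the argument stalls at the nonexistent functional equation for $\Phi_s$. A smaller point: multiplying \eqref{w1.26} by $\tfrac{\log\alpha}{2}$ yields a relation that is antisymmetric under $\alpha\leftrightarrow\beta$, not symmetric; only its combination with the $\phi_1$-series inside $\mathscr{F}_1$ restores the symmetry you need.
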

Next, we try to convince the reader why the result in \eqref{modular log-twist} is unexpected and surprising. 

Maier \cite{maier} was the first mathematician to examine the higher Herglotz function $\sum_{n=1}^{\infty}\psi(nx)n^{-s}$
 for $|\arg(x)|<\pi$ and a continuous variable $s$ such that $\textup{Re}(s)>1$, and to derive, for positive integers $s>1$, its two-term functional equation, later rediscovered by Vlasenko and Zagier see \cite[p.~114, Equation (3)]{maier}, \cite[Equation (11)]{vz}. 
 Observe that
 \begin{equation}\label{derivative}
 	\frac{d}{ds}\sum_{n=1}^{\infty}\left.\frac{1}{n^s}\left(\psi(nx)+\frac{1}{2nx}-\log(n)\right)\right|_{s=0}=-\sum_{n=1}^{\infty}\left(\psi(nx)+\frac{1}{2nx}-\log(nx)\right)\log(n).
\end{equation}
There are no known two-term functional equations for the series $\sum_{n=1}^{\infty}n^{-s}\left(\psi(nx)+\frac{1}{2nx}-\log(n)\right)$ for a \emph{complex} $s$, however, and so the fact there still exists a functional equation for the series on the right-hand side of \eqref{derivative} makes it interesting.  
 
We note in passing that for real numbers $k$ and $N$ such that $k+2N>1$, Gupta and Kumar \cite[Equation (1.9)]{gupta-kumar-ehhf} studied the extended higher Herglotz function
 \begin{align*}
 \mathscr{F}_{k, N}(x):=\sum_{n=1}^{\infty}\frac{1}{n^k}\left(\psi(n^{N}x)+\frac{1}{2n^Nx}-\log(n^Nx)\right),
 \end{align*}
from the point of view of deriving functional equations similar to \eqref{w1.26}, however, the power of $n$ in the denominator there, that is, $k$, is still not a complex number so that differentiation does not make sense.
 
 The proof of Theorem \ref{log-twist} first requires an intermediate modular relation to be derived. This result, given in Theorem \ref{maintheorem} below, contains infinite series whose summands involve the function $\psi_1(x)$. This function is a special case of the generalized digamma function $\psi_k(x)$ defined for $x\in\mathbb{C}\backslash(-\infty,0]$ by
 \begin{equation*}
 	\psi_k(x):=\frac{\Gamma_k'(x)}{\Gamma_k(x)}=-\gamma_k-\frac{\log^{k}(x)}{x}-\sum_{n=1}^{\infty}\left(\frac{\log^{k}(n+x)}{n+x}-\frac{\log^{k}(n)}{n}\right),
 \end{equation*}
where $\Gamma_k(n)$ is the generalized gamma function studied by Dilcher \cite{dilcher}, namely,
	\begin{equation*}
	\Gamma_k(x):=\lim_{n\to\infty}\frac{\text{exp}{\left(\frac{\log^{k+1}(n)}{k+1}x\right)}\prod\limits_{j=1}^{n}\text{exp}{\left(\frac{\log^{k+1}(j)}{k+1}\right)}}{\prod\limits_{j=0}^{n}\text{exp}{\left(\frac{\log^{k+1}(j+x)}{k+1}\right)}}.
\end{equation*}
Observe that $\Gamma_0(x)=\Gamma(x)$ and $\psi_0(x)=\psi(x)$.
The function $\psi_k(x)$ was briefly treated by Ramanujan in Entry 22 of Chapter 8 of his second notebook (see \cite{RN_1}) whereas Deninger studied the functions $\log(\Gamma_1(x))$ and $\psi_1(x)$  during his investigation of an analogue of the Chowla-Selberg formula for real quadratic fields. The function $\log(\Gamma_1(x))$ is now known as \emph{Deninger's function}.

One of the ingredients in the proof of Theorem \ref{log-twist} is an analogue of \eqref{w1.26} for $\psi_1(x)$ obtained by the present authors in \cite{dss}. It is stated next. Define 
\begin{align}
	\phi_1(x)&:= \psi_1(x) + \frac{1}{2x}\log(x)- \frac{1}{2}\log^2(x), \label{phi1 defn}
\end{align}
and
\small\begin{align}\label{ef1}
	\mathscr{F}_1(x)&:=\sqrt{x} \left\{ \sum_{n=1}^{\infty} \phi_1(nx) + \frac{\log^2(2\pi)-(\gamma-\log(x))^2}{4x} + \frac{\pi^2}{48x} \right\} -\frac{\sqrt{x} \log(x)}{2} \left\{ \sum_{n=1}^{\infty} \phi_0(nx) +\frac{\gamma - \log (2\pi x)}{2x} \right\}.
\end{align}
\normalsize
Then, for any $\alpha,\beta $ such that \textup{Re}$(\alpha)>0$, \textup{Re}$(\beta)>0$ and $\alpha \beta =1$, we have \cite[Theorem 2.2]{dss}\footnote{In the present paper, we have only used the first equality of \cite[Theorem 2.2]{dss} as the integral involving the Riemann $\Xi$-function does not have a role to play here.}
\begin{align}\label{k=1}
	\mathscr{F}_1(\alpha)=	\mathscr{F}_1(\beta).
\end{align}
We now explain why the series in \eqref{ef1} converges absolutely for any $x>0$. The Laurent series expansion of the Hurwitz zeta function $\zeta(z, x)$ around $z=1$ is given by 
\begin{equation*}
	\zeta( z, x)=\frac{1}{z-1}+\sum_{j=0}^{\infty}\frac{(-1)^j\gamma_j(x)}{j!}(z-1)^j.
\end{equation*}
Note that when $x=1$, this reduces to \eqref{zeta laurent}. For any $j\in\mathbb{N}\cup\{0\}$, Shirasaka \cite{shirasaka} proved that $\psi_j(x)$ is the additive inverse of the generalized Stieltjes constants $\gamma_j(x)$. 
 This, along with the asymptotic expansion of $\gamma_j(x)$ given in Proposition 3 of \cite{coffey}, gives us the following asymptotic expansion  as $x \to \infty$,  Re$(x)>0$:
\begin{align*}
	\psi_j(x)\sim\frac{\log^{j+1}(x)}{j+1}-\frac{1}{2x}\log^{j}(x)+\sum_{m=1}^{\infty}\frac{B_{2m}}{(2m)!}x^{-2m}\sum_{t=0}^{j}\binom{j}{t}t!s(2m,t+1)\log^{j-t}(x),
\end{align*}
where $B_m$ denotes $m^{\textup{th}}$ Bernoulli number. This clearly shows that the series $\sum_{n=1}^{\infty}\phi_1(nx)$ converges absolutely.

\section{Preliminaries}
In this section we record a few well-known results which are used in the sequel. We begin with the asymmetric version of the functional equation of $\zeta(w)$, for any $w \in \mathbb{C}$ \cite[p.~259, Theorem 12.7]{Apostol}: 
\begin{align}
	\zeta(1-w)= 2 (2\pi)^{-w} \Gamma(w) \zeta(w)\cos\left( \frac{\pi w}{2} \right). \label{asymmetric fe of zeta} 
\end{align}
Next, Stirling's formula for Gamma function in a vertical strip $C\leq\sigma\leq D$ is given by \cite[p.224]{cop}
\begin{equation}\label{Stirling_equn}
	|\Gamma (\sigma + i T)| = \sqrt{2\pi} | T|^{\sigma - 1/2} e^{-\frac{1}{2} \pi |T|} \left(1 + O\left(\frac{1}{|T|}\right)  \right) \quad {\rm as} \quad |T|\rightarrow \infty.
\end{equation}

We now state two versions of \emph{Parseval's formula} to be used in Section \ref{repH}. 
\begin{theorem}\textup{\cite[p. 83, Equation (3.1.11)]{pariskaminski}} 	
	Let $H(s)$ and $G(s)$ be the Mellin transforms of $h(x)$ and $g(x)$ respectively. Suppose that the conditions
		$x^{d-1}g(x) \in L [0, \infty )$ and $H(1-d-ix) \in  L(-\infty, \infty )$
	hold.
	If $H(1-s)$ and $G(s)$ have a common strip of analyticity,  then for any vertical line $\textup{Re}(s)=d$ in the common strip, we have
	\begin{align}\label{Par1}
		\frac{1}{2 \pi i } \int_{(d)} H(1-w) G(w) \, dw =\int_{0}^{\infty}  h(x) g(x) dx,
	\end{align}
	under the assumption that the integral on the right-hand side exists.
\end{theorem}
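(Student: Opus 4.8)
The plan is to derive this Parseval identity directly from the Mellin inversion theorem together with an application of Fubini's theorem, reading the displayed identity from right to left. Throughout I write the Mellin transforms as $H(s)=\int_0^{\infty}h(x)x^{s-1}\,dx$ and $G(s)=\int_0^{\infty}g(x)x^{s-1}\,dx$, with inversion formula $h(x)=\frac{1}{2\pi i}\int_{(c)}H(s)x^{-s}\,ds$ valid for any $c$ in the strip of analyticity of $H$. The hypothesis that $H(1-s)$ and $G(s)$ share a common strip of analyticity guarantees that both transforms and the inversion contour can be placed consistently on the single vertical line $\operatorname{Re}(s)=1-d$ (equivalently $\operatorname{Re}(w)=d$ after the change of variable below), so that every object appearing in the argument is simultaneously well defined.

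First I would start from the right-hand side and insert the inversion formula for $h$ along the line $\operatorname{Re}(s)=1-d$:
\begin{align*}
	\int_0^{\infty}h(x)g(x)\,dx=\int_0^{\infty}\left(\frac{1}{2\pi i}\int_{(1-d)}H(s)x^{-s}\,ds\right)g(x)\,dx.
\end{align*}
The crucial step is to interchange the two integrations, after which the inner $x$-integral becomes $\int_0^{\infty}g(x)x^{-s}\,dx=\int_0^{\infty}g(x)x^{(1-s)-1}\,dx=G(1-s)$, the Mellin transform of $g$ evaluated at $1-s$. This yields
\begin{align*}
	\int_0^{\infty}h(x)g(x)\,dx=\frac{1}{2\pi i}\int_{(1-d)}H(s)G(1-s)\,ds.
\end{align*}
Finally I would substitute $w=1-s$, which maps the line $\operatorname{Re}(s)=1-d$ onto $\operatorname{Re}(w)=d$, the orientation reversal cancelling the sign from $ds=-dw$; this turns the right-hand side into $\frac{1}{2\pi i}\int_{(d)}H(1-w)G(w)\,dw$ and completes the identity \eqref{Par1}.

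The hard part will be rigorously justifying the interchange of integration, and this is exactly where the two integrability hypotheses are used. Writing $s=(1-d)+it$, one has $|x^{-s}|=x^{-(1-d)}=x^{d-1}$, so the modulus of the double integrand factorises and its total absolute integral splits as the product
\begin{align*}
	\left(\int_0^{\infty}x^{d-1}|g(x)|\,dx\right)\left(\frac{1}{2\pi}\int_{-\infty}^{\infty}|H((1-d)+it)|\,dt\right).
\end{align*}
The first factor is finite precisely because $x^{d-1}g(x)\in L[0,\infty)$, and the second is finite because $H(1-d-ix)\in L(-\infty,\infty)$, which after the substitution $x\mapsto -t$ is the integrability of $t\mapsto H((1-d)+it)$. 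Hence the double integral converges absolutely, Fubini's theorem applies, and the interchange is legitimate; the remaining manipulations are the routine change of variable and the recognition of $G(1-s)$ already described. The only other point requiring care is that the inversion formula genuinely recovers $h$ on the chosen line, which follows from placing the contour inside the common strip of analyticity and from the decay of $H$ implicit in its $L^1$ bound along that line.
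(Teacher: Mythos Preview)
The paper does not supply its own proof of this statement: it is quoted in the Preliminaries section as a known result, with a citation to Paris and Kaminski \cite[p.~83, Equation (3.1.11)]{pariskaminski}. So there is nothing to compare against.

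Your argument is the standard derivation and is correct. Inserting the Mellin inversion for $h$ along $\operatorname{Re}(s)=1-d$, interchanging via Fubini, recognising $\int_0^\infty g(x)x^{-s}\,dx=G(1-s)$, and then substituting $w=1-s$ is exactly how the identity is established in the reference. Your justification of Fubini is the right one: with $s=(1-d)+it$ the modulus of the integrand factors as $|H((1-d)+it)|\cdot x^{d-1}|g(x)|$, and the two stated $L^1$ hypotheses make the product integrable. The one point you rightly flag as needing care---that the inversion integral actually reproduces $h$---is handled by the $L^1$ decay of $H$ along the line together with the assumed existence of $\int_0^\infty h(x)g(x)\,dx$; this is precisely the level of rigour at which Paris and Kaminski state the result.
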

Another useful version of this result is
\cite[p.~83, Equation (3.1.13)]{pariskaminski}, if $H(s)$ and $G(s)$ have a common strip of analyticity,  then for any vertical line Re$(s)=d$ in the common strip,
	\begin{align}\label{Par2}
		\frac{1}{2 \pi i } \int_{(d)} H(w) G(w) u^{-w} \, dw =\int_{0}^{\infty}  \frac{1}{x} h\left(\frac{u}{x}\right) g(x) dx.
	\end{align}
\section{Proof of the main result}
The proof of Theorem \ref{log-twist} relies on an intermediate theorem, namely, Theorem \ref{maintheorem}. 
The proof of the latter, in turn, involves a modular relation for an integral $\mathscr{J}(x)$ defined for $x>0$ and $0<c<1$ by
\begin{align}
	\mathscr{J}(x)&:= \frac{1}{2 \pi i} \int_{(c)} \frac{\pi \zeta(w)  \zeta(1-w)}{\sin(\pi w)} \left( \psi(w)  + \frac{\pi}{2} \cot\left(\frac{\pi w}{2}\right) \right) {x}^{-w} \, dw, \label{J defn}
\end{align}
This is exactly what we now set to prove.
	\begin{lemma}\label{J lemma}
	Let $\alpha, \beta>0$ such that $\alpha\beta=1$, then
	\begin{align}
		\sqrt{\alpha} \mathscr{J}(\alpha)=\sqrt{\beta} \mathscr{J}(\beta) \label{Jequation}.
	\end{align}
\end{lemma}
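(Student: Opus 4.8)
The plan is to prove the modular relation \eqref{Jequation} by identifying $\mathscr{J}(x)$ with a concrete series or integral to which the symmetry $x \leftrightarrow 1/x$ is manifest, rather than by manipulating the contour integral directly. First I would move the line of integration in \eqref{J defn} and collect residues. The integrand has $\zeta(w)\zeta(1-w)/\sin(\pi w)$ times the digamma-plus-cotangent factor; the function $\zeta(w)$ contributes a pole at $w=1$, $\zeta(1-w)$ a pole at $w=0$, and $1/\sin(\pi w)$ simple poles at every integer, while $\psi(w)$ has poles at $w=0,-1,-2,\dots$ and $\cot(\pi w/2)$ has poles at even integers. The key simplification I expect is that the asymmetric functional equation \eqref{asymmetric fe of zeta} can be used to rewrite $\zeta(1-w)$ (or symmetrically $\zeta(w)$) so that the combination $\pi\zeta(w)\zeta(1-w)/\sin(\pi w)$ becomes something with a clean reflection symmetry — recall $\pi/\sin(\pi w) = \Gamma(w)\Gamma(1-w)$, so the whole kernel is essentially $\Gamma(w)\Gamma(1-w)\zeta(w)\zeta(1-w)$, which is visibly invariant under $w\mapsto 1-w$.

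The heart of the matter is then the factor $\psi(w) + \tfrac{\pi}{2}\cot(\tfrac{\pi w}{2})$. Under $w \mapsto 1-w$, the kernel $\Gamma(w)\Gamma(1-w)\zeta(w)\zeta(1-w)$ is fixed but this factor is not; so the strategy is to substitute $w \to 1-w$ in the integral representation of $\sqrt{x}\,\mathscr{J}(x)$, producing $x^{w-1} = x^{-1}x^{w}\cdot x^{-w+w}$... more precisely, with $\alpha\beta = 1$ one gets $\alpha^{-w}\alpha^{1/2} = \beta^{w}\beta^{-1/2} = \beta^{w-1}\beta^{1/2}$, so that $\sqrt{\alpha}\,\mathscr{J}(\alpha)$ after the change of variable becomes an integral over the line $(1-c)$ of the same kernel against $\beta^{-(1-w)}$ times $\psi(1-w) + \tfrac{\pi}{2}\cot(\tfrac{\pi(1-w)}{2})$. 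Comparing with $\sqrt{\beta}\,\mathscr{J}(\beta)$, which has the factor $\psi(w) + \tfrac{\pi}{2}\cot(\tfrac{\pi w}{2})$ on the line $(c)$, I would need to show the difference of the two digamma-cotangent factors integrates to zero against the symmetric kernel — equivalently, that $\psi(w) - \psi(1-w) + \tfrac{\pi}{2}\cot(\tfrac{\pi w}{2}) - \tfrac{\pi}{2}\cot(\tfrac{\pi(1-w)}{2})$, when paired with $\Gamma(w)\Gamma(1-w)\zeta(w)\zeta(1-w)$, contributes nothing. Using the reflection formula $\psi(1-w) - \psi(w) = \pi\cot(\pi w)$ and the identity $\cot(\tfrac{\pi w}{2}) - \cot(\tfrac{\pi(1-w)}{2}) = \cot(\tfrac{\pi w}{2}) + \tan(\tfrac{\pi w}{2})$ (together with the double-angle relation $\cot\theta + \tan\theta = 2\csc(2\theta) = 2/\sin(2\theta)$, so $\tfrac{\pi}{2}(\cot(\tfrac{\pi w}{2}) + \tan(\tfrac{\pi w}{2})) = \pi/\sin(\pi w) = \pi\csc(\pi w)$), the combined discrepancy collapses to $\pi\cot(\pi w) + \pi\csc(\pi w) = \pi\tfrac{1+\cos(\pi w)}{\sin(\pi w)} = \pi\cot(\tfrac{\pi w}{2})$ — a clean function that should integrate against the symmetric kernel to account exactly for the residual terms picked up when shifting the contour back from $(1-c)$ to $(c)$.

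So the argument assembles as follows: (i) write $\sqrt{\alpha}\,\mathscr{J}(\alpha)$ as a contour integral over $(c)$; (ii) apply $w \mapsto 1-w$ to land on the line $(1-c)$ with the $\beta$-variable and a modified digamma-cotangent factor; (iii) shift the line of integration from $(1-c)$ back to $(c)$ (legitimate because Stirling's formula \eqref{Stirling_equn} forces exponential decay of $\Gamma(w)\Gamma(1-w)$ — hence of the whole integrand — on horizontal segments, so the horizontal pieces vanish), picking up residues at the poles in the strip $c < \Re(w) < 1-c$, which for $0<c<1/2$ is just $w = 1/2$, or for general $c$ possibly $w = 0$ and $w = 1$ as well depending on whether $c$ exceeds $1/2$; (iv) show these residue contributions, together with the discrepancy $\pi\cot(\tfrac{\pi w}{2})$ identified above, cancel, leaving exactly $\sqrt{\beta}\,\mathscr{J}(\beta)$. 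The main obstacle I anticipate is bookkeeping in step (iii)–(iv): verifying that the residues at $w=1/2$ (and at any other poles crossed) exactly match the boundary discrepancy, and handling the delicate cases where a pole sits on one of the lines $\Re(w)=c$ or $\Re(w)=1-c$ (which one avoids by choosing $c \ne 1/2$ and $c\notin\{0,1\}$, noting the final identity is independent of $c$ by analytic continuation). The analytic justification for moving contours — uniform exponential decay from \eqref{Stirling_equn} combined with the polynomial growth of $\zeta$, $\psi$, $\cot$ on vertical lines away from poles — is routine but must be stated.
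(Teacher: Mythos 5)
Your overall strategy --- the substitution $w \mapsto 1-w$, the observation that $\pi/\sin(\pi w) = \Gamma(w)\Gamma(1-w)$ makes the kernel $\pi\zeta(w)\zeta(1-w)/\sin(\pi w)$ invariant under that substitution, the reflection formula for $\psi$, and a shift of the line from $\mathrm{Re}(w)=1-c$ back to $\mathrm{Re}(w)=c$ --- is exactly the paper's. But a sign error at the decisive step derails your plan. Since
\begin{align*}
\cot\Bigl(\tfrac{\pi(1-w)}{2}\Bigr) = \cot\Bigl(\tfrac{\pi}{2}-\tfrac{\pi w}{2}\Bigr) = \tan\Bigl(\tfrac{\pi w}{2}\Bigr),
\end{align*}
the cotangent discrepancy is
\begin{align*}
\cot\Bigl(\tfrac{\pi w}{2}\Bigr) - \cot\Bigl(\tfrac{\pi(1-w)}{2}\Bigr) = \cot\Bigl(\tfrac{\pi w}{2}\Bigr) - \tan\Bigl(\tfrac{\pi w}{2}\Bigr) = 2\cot(\pi w),
\end{align*}
not $\cot+\tan = 2\csc$ as you wrote. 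Combining this with $\psi(w)-\psi(1-w) = -\pi\cot(\pi w)$ from \eqref{psi reflection} gives
\begin{align*}
\Bigl[\psi(w)+\tfrac{\pi}{2}\cot\bigl(\tfrac{\pi w}{2}\bigr)\Bigr] - \Bigl[\psi(1-w)+\tfrac{\pi}{2}\cot\bigl(\tfrac{\pi(1-w)}{2}\bigr)\Bigr] = -\pi\cot(\pi w) + \pi\cot(\pi w) = 0,
\end{align*}
so the factor $\psi(w)+\tfrac{\pi}{2}\cot\bigl(\tfrac{\pi w}{2}\bigr)$ is itself invariant under $w\mapsto 1-w$. The whole integrand, apart from $x^{-w}$, is then symmetric, and $\beta\,\mathscr{J}(\beta)=\mathscr{J}(\alpha)$ --- hence $\sqrt{\beta}\,\mathscr{J}(\beta)=\sqrt{\alpha}\,\mathscr{J}(\alpha)$ after multiplying by $\sqrt{\alpha}=1/\sqrt{\beta}$ --- drops out with no residue bookkeeping at all. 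This is precisely the paper's four-line proof.

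The fallback you propose for absorbing your (spurious) discrepancy $\pi\cot(\pi w/2)$ cannot succeed. That term paired with the symmetric kernel is, by \eqref{H defn}, exactly $\mathscr{H}(\beta)$, which is not zero; and the integrand of $\mathscr{J}$ is holomorphic throughout the strip $0<\mathrm{Re}(w)<1$, since $1/\sin(\pi w)$, $\psi(w)$ and $\cot(\pi w/2)$ have poles only at integers while $\zeta(w)$ and $\zeta(1-w)$ have their poles at $w=1$ and $w=0$. In particular there is no pole at $w=1/2$, so the contour shift from $1-c$ to $c$ collects no residues, and there is nothing available to cancel the phantom term. Were your computation of the discrepancy correct, the lemma would in fact be false. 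Fix the sign and your argument collapses to the paper's.
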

\begin{proof}
	Replacing $w$ by $1-w$ in the definition of $\mathscr{J}(\beta)$ and using \cite[p.~76, Exercise \textbf{3.17}]{temme} 
	 \begin{align}\label{psi reflection}
	 \psi(1-w)=\psi(w)+ \pi \cot(\pi w)
	\end{align} 
 in the second step, we see that
	\begin{align*}
		\beta	\mathscr{J}(\beta)&=   \frac{1}{2 \pi i}  \int_{(c)} \frac{\pi \zeta(w)  \zeta(1-w)}{\sin(\pi w)} \left( \psi(1-w) + \frac{\pi}{2} \cot\left(\frac{\pi (1-w)}{2}\right) \right) {\beta}^{w} \, dw \\
		&=    \frac{1}{2 \pi i}  \int_{(c)} \frac{\pi \zeta(w)  \zeta(1-w)}{\sin(\pi w)} \left( \psi(w) + \pi \cot(\pi w) + \frac{\pi}{2} \tan\left(\frac{\pi w}{2}\right) \right) {\alpha}^{-w} \, dw \\
		&=     \frac{1}{2 \pi i}  \int_{(c)} \frac{\pi \zeta(w)  \zeta(1-w)}{\sin(\pi w)} \left( \psi(w) + \frac{\pi}{2} \cot\left(\frac{\pi w}{2}\right) \right) {\alpha}^{-w} \, dw \\
		&= \mathscr{J}(\alpha),
	\end{align*}
	where the second-to-last step follows from the fact that $\cot(\theta)-\tan(\theta)=2\cot(2\theta)$.
	\end{proof}
We now prove the intermediate theorem which will be used, in turn, to prove Theorem \ref{log-twist}.
\begin{theorem}\label{maintheorem}
	For $x>0$, define $\mathscr{F}(x)$ by
	\begin{align*}
		\mathscr{F}(x):=\sum_{n=1}^{\infty} \phi_1(nx)  - \sum_{n=1}^{\infty} \log(nx)\phi_0(nx)  +  \frac{1}{2} \mathscr{H}(x) - \frac{\pi^2}{12x}, \notag 
	\end{align*}
	where $\phi_0, \phi_1$ and $\mathscr{H}$ are as defined above in \eqref{phi0 defn}, \eqref{phi1 defn}, and \eqref{H defn} respectively. Then, for any $\alpha, \beta>0$ such that $\alpha\beta=1$,
	\begin{align}
		\sqrt{\alpha}\mathscr{F}(\alpha)=\sqrt{\beta}\mathscr{F}(\beta). \label{main result 1}
	\end{align}
\end{theorem}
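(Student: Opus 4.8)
We outline the argument. The plan is to establish, for every $x>0$, the identity
\begin{align}\label{maintheorempf}
	\mathscr{F}(x)=\mathscr{J}(x)-\gamma\left(\sum_{n=1}^{\infty}\phi_0(nx)+\frac{\gamma-\log(2\pi x)}{2x}\right),
\end{align}
after which \eqref{main result 1} is immediate: the first equality in \eqref{w1.26} says that $\sqrt{x}$ times the bracketed function is invariant under $x\mapsto 1/x$, and Lemma \ref{J lemma} says the same of $\sqrt{x}\,\mathscr{J}(x)$. To prove \eqref{maintheorempf} I would use Mellin--Barnes analysis. First, from Binet's integral $\psi(y)=\log y-\frac{1}{2y}-2\int_0^{\infty}t\,dt/((t^2+y^2)(e^{2\pi t}-1))$ one gets $\phi_0(y)=-2\int_0^{\infty}t\,dt/((t^2+y^2)(e^{2\pi t}-1))$; interchanging the order of integration, using $\int_0^{\infty}y^{w-1}(t^2+y^2)^{-1}\,dy=\frac{\pi}{2}t^{w-2}/\sin(\pi w/2)$ together with the asymmetric functional equation \eqref{asymmetric fe of zeta}, yields $\int_0^{\infty}\phi_0(y)y^{w-1}\,dy=-\pi\zeta(1-w)/\sin(\pi w)$ for $1<\operatorname{Re}(w)<2$, and differentiating in $w$ gives the Mellin transform of $\log(y)\phi_0(y)$. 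Multiplying by $\zeta(w)$ and inverting furnishes line-integral representations of $\sum_n\phi_0(nx)$ and $\sum_n\log(nx)\phi_0(nx)$ on $\operatorname{Re}(w)=c\in(1,2)$; shifting the first to $\operatorname{Re}(w)\in(0,1)$ and collecting the residue $(\log(2\pi x)-\gamma)/(2x)$ at $w=1$ identifies $\sum_n\phi_0(nx)+\frac{\gamma-\log(2\pi x)}{2x}$ with $-\frac{1}{2\pi i}\int_{(c)}\pi\zeta(w)\zeta(1-w)x^{-w}/\sin(\pi w)\,dw$ for $c\in(0,1)$.

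The heart of the matter is the Mellin transform of $\phi_1$. Hermite's formula for $\zeta(s,y)$ gives the Binet-type representation $\phi_1(y)=2\int_0^{\infty}\left(y\arctan(t/y)-\tfrac{t}{2}\log(t^2+y^2)\right)dt/((t^2+y^2)(e^{2\pi t}-1))$, and one evaluates $\int_0^{\infty}\phi_1(y)y^{w-1}\,dy$ by interchanging integrals. This needs the identities $\int_0^{\infty}v^{1-w}\,dv/((1+v^2)(1+v^2t^2))=\frac{\pi}{2\sin(\pi w/2)}\cdot\frac{1-t^w}{1-t^2}$, $\int_0^1(1-t^w)/(1-t^2)\,dt=\frac12\left(\psi(\tfrac{w+1}{2})-\psi(\tfrac12)\right)$, the digamma duplication formula $\psi(\tfrac w2)+\psi(\tfrac{w+1}{2})=2\psi(w)-2\log 2$, and once more \eqref{asymmetric fe of zeta}; after simplification one obtains the clean formula
\begin{align*}
	\int_0^{\infty}\phi_1(y)y^{w-1}\,dy=\frac{d}{dw}\left(-\frac{\pi\zeta(1-w)}{\sin(\pi w)}\right)+\frac{\pi\zeta(1-w)\left(\psi(w)+\gamma\right)}{\sin(\pi w)}\qquad(1<\operatorname{Re}(w)<2).
\end{align*}
Thus the $\frac{d}{dw}\zeta(1-w)$-terms cancel upon subtraction, and for $c\in(1,2)$,
\begin{align*}
	\sum_{n=1}^{\infty}\phi_1(nx)-\sum_{n=1}^{\infty}\log(nx)\phi_0(nx)=\frac{1}{2\pi i}\int_{(c)}\frac{\pi\zeta(w)\zeta(1-w)\psi(w)}{\sin(\pi w)}x^{-w}\,dw-\gamma\sum_{n=1}^{\infty}\phi_0(nx),
\end{align*}
the last term coming from the $\gamma$-part together with the Mellin transform of $\phi_0$.

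To finish, I would push the remaining integral to a line $\operatorname{Re}(w)=c'\in(0,1)$, on which its kernel is exactly $\pi\zeta(w)\zeta(1-w)\psi(w)/\sin(\pi w)$, that is, the kernel of $\mathscr{J}(x)-\frac12\mathscr{H}(x)$ arising from \eqref{J defn} and \eqref{H defn}. The contour crosses the \emph{double} pole at $w=1$, where $\zeta(w)$ and $1/\sin(\pi w)$ each contribute a simple pole; a Laurent expansion carrying $\zeta$, $\psi$ and $x^{-w}$ to second order evaluates its residue as $\frac{1}{x}\left(\frac{\pi^2}{12}+\frac{\gamma}{2}\log x+\frac{\gamma}{2}\log(2\pi)-\frac{\gamma^2}{2}\right)$. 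Substituting everything into the definition of $\mathscr{F}(x)$, the two $\frac12\mathscr{H}(x)$ terms cancel, the two $\pi^2/(12x)$ terms cancel, and the leftover elementary terms cancel against $\gamma\cdot\frac{\gamma-\log(2\pi x)}{2x}$ (after writing $\sum_n\phi_0(nx)$ as $\left(\sum_n\phi_0(nx)+\frac{\gamma-\log(2\pi x)}{2x}\right)-\frac{\gamma-\log(2\pi x)}{2x}$), leaving precisely \eqref{maintheorempf}.

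Throughout, convergence of the line integrals and the legitimacy of interchanging summations and integrations are secured by Stirling's formula \eqref{Stirling_equn} and the polynomial bounds for $\zeta$, $\zeta'$, $\psi$ on vertical lines. I expect the main obstacle to be the Mellin transform of $\phi_1$, i.e.\ the analytic handling of Deninger's function $\psi_1$ via Hermite's formula and the several auxiliary integral identities above, and—secondarily—the careful bookkeeping at the double pole $w=1$.
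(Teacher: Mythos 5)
Your proposal is correct, and it reaches the theorem by a genuinely different route from the paper. The paper never writes down a closed form for $\mathscr{F}$: it represents $\sqrt{\alpha}\sum_n\phi_1(n\alpha)$ as a Mellin--Barnes integral, changes $s\mapsto 1-w$, invokes the functional equation for $\zeta'(1-w)/\zeta(1-w)$ to split the integrand into a piece $A_1$ of the original shape and a remainder $A_2$, and then eliminates $A_2$ by the symmetrization trick $\sqrt{\alpha}A_2(\alpha)+\sqrt{\beta}A_2(\beta)=0$ before expanding $A_2$ into $J_1+J_2-\mathscr{J}+\mathscr{H}$ and invoking \eqref{w1.26} and Lemma \ref{J lemma}. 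You instead prove the stronger pointwise identity $\mathscr{F}(x)=\mathscr{J}(x)-\gamma\bigl(\sum_n\phi_0(nx)+\tfrac{\gamma-\log(2\pi x)}{2x}\bigr)$, from which the modular relation is immediate via the same two external inputs. The structural insight that makes this work --- and which the paper's route obscures --- is that the Mellin transform of $\phi_1$ equals $\tfrac{d}{dw}\bigl(-\pi\zeta(1-w)/\sin(\pi w)\bigr)+\pi\zeta(1-w)(\psi(w)+\gamma)/\sin(\pi w)$, whose derivative term is \emph{exactly} the Mellin transform of $\log(y)\phi_0(y)$; so the $\zeta'$-type contributions cancel identically in $\sum_n\phi_1(nx)-\sum_n\log(nx)\phi_0(nx)$, and no logarithmic derivative of $\zeta$ or symmetrization is needed. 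I checked your Laurent expansion at the double pole $w=1$ (residue $\tfrac{1}{x}\bigl(\tfrac{\pi^2}{12}+\tfrac{\gamma}{2}\log x+\tfrac{\gamma}{2}\log(2\pi)-\tfrac{\gamma^2}{2}\bigr)$) and the final bookkeeping; both are correct and do yield your claimed identity. The one place where you work harder than necessary is the derivation of the Mellin transform of $\phi_1$ via Hermite's formula and the auxiliary integral evaluations: that formula is equivalent, by an integration by parts on the vertical line converting $\log(x)x^{-s}$ into $-\tfrac{d}{ds}x^{-s}$, to the representation of $\psi_1(x+1)-\tfrac12\log^2(x)$ in \cite[Theorem 3.3]{bdg_log}, which is precisely the ingredient the paper cites; quoting it would shorten your argument considerably. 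What your approach buys in exchange is an explicit and reusable identity exhibiting $\mathscr{F}$ as a linear combination of two functions already known to be modular, which is arguably more illuminating than the paper's cancellation scheme.
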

\begin{proof}
From \cite[Theorem 3.3]{bdg_log},
\begin{align*}
	\frac{1}{2\pi i}\int_{c-i\infty}^{c+i\infty}\frac{\pi\zeta(1-s)}{\sin(\pi s)}\left(\gamma-\log(x)+\psi(s)\right)x^{-s}\, ds = \psi_1(x+1)-\frac{1}{2}\log^2(x),
\end{align*}
where $0<c=\textup{Re}(s)<1$. Shifting the line of integration from Re$(s)=c$ to Re$(s)=d$, where $1<d<2$ and noting the integrals along the horizontal segments go to zero (due to \eqref{Stirling_equn}) as the height of the contour tends to $\infty$, by Cauchy's residue theorem,
\begin{align*}
 \frac{1}{2 \pi i} \int_{(d)} \frac{\pi \zeta(1-s)}{\sin(\pi s)} (\gamma - \log(x) + \psi(s)) x^{-s} \, ds=	\phi_1(x), \label{psi1 mellin}
\end{align*}
where we can see that the residue of the integrand at $s=1$ encountered is $\log(x)/(2x)$, and where we used the functional equation \cite[Equation (1.9)]{dss}
\begin{equation*}
 \psi_1(x+1)=\psi_1(x)+\frac{\log(x)}{x}.
\end{equation*}
  Thus,
\begin{align*}
	\sqrt{\alpha}\sum_{n=1}^{\infty} \phi_1(n\alpha) =&\sqrt{\alpha} \sum_{n=1}^{\infty} \frac{1}{2 \pi i} \int_{(d)} \frac{\pi \zeta(1-s)}{\sin(\pi s)} (\gamma - \log(n\alpha) + \psi(s)) (n\alpha)^{-s} \, ds \\
	=& \frac{\sqrt{\alpha}}{2 \pi i} \int_{(d)} \frac{\pi \zeta(1-s)}{\sin(\pi s)} \left((\gamma - \log(\alpha) + \psi(s)) \zeta(s) + \zeta'(s)\right) {\alpha}^{-s} \, ds,
\end{align*}
where the interchange of the order of summation and integration is easily justified owing to the exponential decay of the integrand. We also used the fact $\sum_{n=1}^{\infty}\log(n)n^{-s}=-\zeta'(s)$ here. A shift of the line of integration back to $0<c<1$ and another application of the residue theorem yields
\begin{align}
	\sqrt{\alpha} \sum_{n=1}^{\infty} \phi_1(n\alpha)
	=& \frac{\sqrt{\alpha}}{2 \pi i} \int_{(c)} \frac{\pi \zeta(1-s)}{\sin(\pi s)} \left((\gamma - \log(\alpha) + \psi(s)) \zeta(s) + \zeta'(s)\right) {\alpha}^{-s} \, ds + \sqrt{\alpha} R_0(\alpha), \notag \\
	=& I(\alpha) + \sqrt{\alpha} R_0(\alpha).\label{sum integral R0}
\end{align}
where,
\begin{align}
	I(\alpha):= \frac{\sqrt{\alpha}}{2 \pi i} \int_{(c)} \frac{\pi \zeta(1-s)}{\sin(\pi s)} \left((\gamma - \log(\alpha) + \psi(s)) \zeta(s) + \zeta'(s)\right) {\alpha}^{-s} \, ds, \label{I defn}
\end{align}
and 	$R_0(\alpha)$  is the residue of the integrand at $s=1$ given by
\begin{align}
	R_0(\alpha):= \frac{1}{\alpha} \left( \frac{\gamma^2}{4} - \frac{\pi^2}{48} -\frac{1}{4} (\log^2 (2)+\log(\pi) \log(4 \pi))+ \frac{\log(\alpha)}{4} (-2 \gamma + \log(\alpha))\right).\label{R0 defn}
\end{align}
Employing the change of variable $s=1-w$ in the definition of $I(\alpha)$ in \eqref{I defn} and noting that $\alpha\beta=1$ leads to
\begin{align}
		I(\alpha) &= \frac{\sqrt{\alpha}}{2 \pi i} \int_{(c)} \frac{\pi \zeta(1-s) \zeta(s)}{\sin(\pi s)} \left(  \gamma - \log(\alpha) + \psi(s)  + \frac{\zeta'(s)}{\zeta(s)} \right) {\alpha}^{-s} \, ds \label{I integral} \\
		&=  \frac{\sqrt{\beta}}{2 \pi i} \int_{(c)} \frac{\pi \zeta(w) \zeta(1-w)}{\sin(\pi w)} \left( \gamma + \log(\beta) + \psi(1-w) + \frac{\zeta'(1-w)}{\zeta(1-w)} \right) {\beta}^{-w} \, dw.\nonumber
\end{align}
Using the formula \cite[p.~20]{titch}
\begin{align*}
\frac{\zeta'(1-w)}{\zeta(1-w)}=\log(2\pi)+\frac{\pi}{2}\tan\left(\frac{\pi w}{2}\right)-\psi(w)-\frac{\zeta'(w)}{\zeta(w)},	
\end{align*}
we write $I(\alpha)$ in the form
\begin{align}\label{I A1 A2}
	I(\alpha)=\sqrt{\beta} (A_{1}(\beta) + A_{2}(\beta)),
\end{align}
where
\begin{align}
	A_{1}(\beta)&= \frac{1}{2 \pi i} \int_{(c)} \frac{\pi \zeta(w)  \zeta(1-w)}{\sin(\pi w)} \bigg( \gamma - \log(\beta) + \psi(w)  + \frac{\zeta'(w)}{\zeta(w)} \bigg) {\beta}^{-w} \, dw ,\label{A1 defn} \\
	A_{2}(\beta)&= \frac{1}{2 \pi i} \int_{(c)} \frac{\pi \zeta(w)  \zeta(1-w)}{\sin(\pi w)} \bigg( \log(2\pi\beta^{2})  -2 \frac{\zeta'(w)}{\zeta(w)} + \psi(1-w) - 2\psi(w)  +\frac{\pi}{2} \tan\left(\frac{\pi w}{2}\right) \bigg) {\beta}^{-w} \, dw.\label{A2 defn}
\end{align}
Now use the reflection formula \eqref{psi reflection} and then the identity $\cot(\pi w/2)-\tan(\pi w/2)=2\cot(\pi w)$ in the definition of $A_2(\beta)$ to get
	\begin{align}
	A_{2}(\beta)&= \frac{1}{2 \pi i} \int_{(c)} \frac{\pi \zeta(w)  \zeta(1-w)}{\sin(\pi w)} \left(  \log(2\pi\beta^{2})  -2 \frac{\zeta'(w)}{\zeta(w)} - \psi(w) +\frac{\pi}{2} \cot\left(\frac{\pi w}{2}\right) \right) {\beta}^{-w} \, dw \notag \\
	&=  J_1(\beta) + J_2(\beta) - \mathscr{J}(\beta) + \mathscr{H}(\beta), \label{A2 J1 J2 J H}
	\end{align}
	where $\mathscr{J}$ and $\mathscr{H}$ are as defined in \eqref{J defn} and \eqref{H defn} respectively, and
	\begin{align}
		J_1(\beta)&:= \log(2\pi \beta^2)\cdot \frac{1}{2 \pi i} \int_{(c)} \frac{\pi \zeta(w)  \zeta(1-w)}{\sin(\pi w)}  {\beta}^{-w} \, dw, \label{J1 defn} \\
		J_2(\beta)&:= -2 \cdot \frac{1}{2 \pi i} \int_{(c)} \frac{\pi \zeta'(w)  \zeta(1-w)}{\sin(\pi w)}  {\beta}^{-w} \, dw. \label{J2 defn}
	\end{align}
	We first evaluate $J_1(\beta)$.
		In \eqref{J1 defn}, move the line of integration from Re$
		(s)=c$ to Re$(s)=d$ with $1<d<2$, and then use the series definition of $\zeta(w)$ in the subsequent step to arrive at
	\begin{align*}
		J_1(\beta) &= \log(2\pi \beta^2) \left(\frac{1}{2 \pi i} \int_{(d)} \frac{\pi \zeta(w)  \zeta(1-w)}{\sin(\pi w)} {\beta}^{-w} \, dw - \left( \frac{\gamma - \log(2 \pi \beta)}{2\beta} 	\right) \right)\\
		&= \log(2\pi \beta^2) \left( \sum_{n=1}^{\infty} \frac{1}{2 \pi i} \int_{(d)} \frac{\pi  \zeta(1-w)}{\sin(\pi w)} {(n\beta)}^{-w} \, dw - \left( \frac{\gamma - \log(2 \pi \beta)}{2\beta} 	\right) \right).
	\end{align*}
	We now move the line of integration from Re$
	(s)=d$ to Re$(s)=c$, where $0<c<1$, so as to get
	\begin{align*}
		J_1(\beta)
		= -\log(2\pi \beta^2) \left( \sum_{n=1}^{\infty} \left(  \frac{1}{2 \pi i} \int_{(c)} \left(-\frac{\pi  \zeta(1-w)}{\sin(\pi w)} {(n\beta)}^{-w} \right) \, dw  -\frac{1}{2n\beta}	 \right) + \left( \frac{\gamma - \log(2 \pi \beta)}{2\beta} 	\right) \right).
	\end{align*}
	Now employ Kloosterman's result \cite[p.34, equation  2.15.8]{titch}
	\begin{align}\label{P4}
		\frac{1}{2 \pi i } \int_{(c)} \frac{-\pi \zeta(1-w)}{\sin(\pi w)}  {x}^{-w} \, dw = \psi(x+1) -\log(x)
	\end{align}
 and the functional equation
 \begin{align}\label{psi fe}
  \psi(x+1)=\psi(x)+\frac{1}{x}
  \end{align}
  	to conclude that
		\begin{align}
		J_1(\beta)
		&= -\log(2\pi \beta^2) \left( \sum_{n=1}^{\infty} \phi_0(n\beta) + \left( \frac{\gamma - \log(2 \pi \beta)}{2\beta} 	\right) \right),\label{J1 last}
	\end{align}
	where $\phi_0$ is as defined in \eqref{phi0 defn}.
	
	We now handle $J_2(\beta)$ similarly. A shift of the line of integration from Re$
	(s)=c$ to Re$(s)=d$ with $1<d<2$, and the use the series definition of $\zeta'(w)$ produces
	\begin{align*}
		J_2(\beta)&= -2 \sum_{n=1}^{\infty} \log(n) \left( \frac{1}{2 \pi i} \int_{(d)} \frac{-\pi  \zeta(1-w)}{\sin(\pi w)} {(n\beta)}^{-w} \, dw \right) + 2 P_{1}(\beta),
	\end{align*}
	where
	\begin{align}
		P_1(\beta)=\frac{1}{\beta} \left( \frac{\gamma^2}{4} - \frac{5\pi^2}{48} -\frac{1}{4} \left( \log^2(2) + \log(\pi) \log(4\pi) \right) - \frac{1}{4} \log\beta \log(4\pi^2 \beta) \right). \label{P1 defn}
	\end{align}
	Another shift of the line of integration back to Re$(s)=c$, where $0<c<1$, and then the use of \eqref{P4} and \eqref{psi fe}  leads to
	\begin{align}
		J_2(\beta)= -2 \sum_{n=1}^{\infty}  \phi_0(n\beta)\log(n) + 2 P_{1}(\beta).	\label{J2 last}
		\end{align}
	From \eqref{sum integral R0} and \eqref{I A1 A2}, we get
	\begin{align}\label{new 1}
		\sqrt{\alpha} \sum_{n=1}^{\infty} \phi_1(n\alpha) = \sqrt{\beta} (A_1(\beta) + A_2(\beta))  + \sqrt{\alpha} R_0(\alpha).
	\end{align}
	Observe from \eqref{I integral}, \eqref{A1 defn} and \eqref{sum integral R0} that
	\begin{equation}\label{new2}
		\sqrt{\beta}A_1(\beta)=I(\beta)=\sqrt{\beta}	\sum_{n=1}^{\infty} \phi_1(n\beta) - \sqrt{\beta}R_0(\beta),
	\end{equation}
	where, the last equality follows from replacing $\alpha$ by $\beta$ in \eqref{sum integral R0}.  Substitute \eqref{new2} in \eqref{new 1} to obtain
	\begin{align}
		\sqrt{\alpha} \sum_{n=1}^{\infty} \phi_1(n\alpha) 
		=  \sqrt{\beta}	\sum_{n=1}^{\infty} \phi_1(n\beta) - \sqrt{\beta}R_0(\beta) + \sqrt{\beta} A_2 (\beta) + \sqrt{\alpha} R_0(\alpha). \label{almost there 1}
	\end{align}
	Interchanging $\alpha$ and $\beta$ in \eqref{almost there 1}, we have
	\begin{align}
		\sqrt{\beta}	\sum_{n=1}^{\infty} \phi_1(n\beta)= \sqrt{\alpha} 	\sum_{n=1}^{\infty} \phi_1(n\alpha) - \sqrt{\alpha}R_0(\alpha) + \sqrt{\alpha} A_2(\alpha) + \sqrt{\beta} R_0(\beta). \label{almost there 2}
	\end{align}
	Add \eqref{almost there 1} and \eqref{almost there 2} to get\footnote{This result can also be derived by starting with the definition of $A_2(\beta)$ given in \eqref{A2 defn}.}
	\begin{align}
		\sqrt{\alpha} A_2(\alpha) + \sqrt{\beta} A_2 (\beta) = 0. \label{A2 FE}
	\end{align}	
	A rearrangement of \eqref{almost there 1} yields
		\begin{align*}
		\sqrt{\alpha} 	\sum_{n=1}^{\infty} \phi_1(n\alpha) - \sqrt{\alpha} R_0(\alpha) - \frac{1}{2} \sqrt{\beta} A_2 (\beta) = \sqrt{\beta} 	\sum_{n=1}^{\infty} \phi_1(n\beta) - \sqrt{\beta}R_0(\beta) + \frac{1}{2} \sqrt{\beta} A_2 (\beta) .
	\end{align*}
	Invoking \eqref{A2 FE} on the left-hand side of the above equation, we get
		\begin{align*}
		\sqrt{\alpha} 	\sum_{n=1}^{\infty} \phi_1(n\alpha) - \sqrt{\alpha} R_0(\alpha) + \frac{1}{2} \sqrt{\alpha} A_2 (\alpha) = \sqrt{\beta} 	\sum_{n=1}^{\infty} \phi_1(n\beta) - \sqrt{\beta}R_0(\beta) + \frac{1}{2} \sqrt{\beta} A_2 (\beta),
	\end{align*}
	which is completely symmetric in $\alpha$ and $\beta$. Now employ \eqref{A2 J1 J2 J H}, \eqref{J1 last} and \eqref{J2 last} in the above equation, then use Ramanujan's formula \eqref{w1.26} and \eqref{Jequation} to arrive at
	\begin{align*}
		&\sqrt{\alpha} \left(	\sum_{n=1}^{\infty} \phi_1(n\alpha)  - \sum_{n=1}^{\infty} \log(n\alpha)\phi_0(n\alpha) -  R_0(\alpha) +  P_{1}(\alpha) - \log(\alpha) \left( \frac{\gamma - \log(2\pi \alpha)}{2\alpha} \right) +  \frac{1}{2} \mathscr{H}(\alpha)  \right) \notag \\
		=&\sqrt{\beta} \left(	\sum_{n=1}^{\infty} \phi_1(n\beta)  - \sum_{n=1}^{\infty} \log(n\beta)\phi_0(n\beta) -  R_0(\beta) +  P_{1}(\beta) - \log(\beta) \left( \frac{\gamma - \log(2\pi \beta)}{2\beta} \right) +  \frac{1}{2} \mathscr{H}(\beta)  \right). 
	\end{align*}
	Finally, use \eqref{R0 defn} and \eqref{P1 defn} and simplify to arrive at \eqref{main result 1}. This completes the proof.
\end{proof}
We are now ready to prove the main result of this paper, that is, Theorem \ref{log-twist}. 

\begin{proof}[Theorem \textup{\ref{log-twist}}][]
Subtract the corresponding sides of \eqref{main result 1} from those of\eqref{k=1} and simplify to arrive at the result.
\end{proof}	
The only reason the proof of this theorem looks short is because the majority of the reasoning which leads to this result is distributed in Lemma \ref{J lemma} and Theorem \ref{maintheorem}. 
	
\section{Different representations of $\mathscr{H}(x)$}\label{repH}
The integral $\mathscr{H}(x)$, defined in \eqref{H defn}, admits various interesting representations as integrals or series. Before stating them, we establish a lemma which is of independent interest.
	\begin{lemma}\label{lemma integral}
	For any $x>0$ and $1<d<2$,
	\begin{align}\label{lastsectionlemma}
		\int_{0}^{\infty} \frac{t \log(t)}{(1+t^2)(e^{xt}-1)} \, dt = \frac{\pi^2}{4} \cdot \frac{1}{2\pi i} \int_{(d)} \Gamma(w) \zeta(w)\cot \left( \frac{\pi w}{2}  \right) \textup{cosec} \left( \frac{\pi w}{2} \right)  x^{-w}   \, dw.
	\end{align}
\end{lemma}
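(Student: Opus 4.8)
The plan is to recognize the left-hand side as $\int_0^\infty h(t)g(t)\,dt$ for a suitable pair of functions and then invoke the Parseval-type identity \eqref{Par1}. I would set $h(t):=\dfrac{t\log t}{1+t^2}$ and $g(t):=\dfrac{1}{e^{xt}-1}$, so that the left-hand side of \eqref{lastsectionlemma} is precisely $\int_0^\infty h(t)g(t)\,dt$; note this integral converges since near $t=0$ the integrand behaves like $x^{-1}\log t$ and near $t=\infty$ it decays exponentially.

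First I would compute the two Mellin transforms. For $g$, the classical formula $\int_0^\infty t^{s-1}(e^{t}-1)^{-1}\,dt=\Gamma(s)\zeta(s)$ for $\Re(s)>1$ together with the substitution $t\mapsto xt$ gives $G(s)=x^{-s}\Gamma(s)\zeta(s)$, analytic for $\Re(s)>1$. For $h$, I would begin from $\int_0^\infty \dfrac{t^{s}}{1+t^2}\,dt=\dfrac{\pi}{2}\sec\!\left(\dfrac{\pi s}{2}\right)$, valid for $-1<\Re(s)<1$ (substitute $u=t^2$ and apply $\int_0^\infty u^{a-1}(1+u)^{-1}\,du=\pi/\sin(\pi a)$ with $a=(s+1)/2$), and then differentiate in $s$ under the integral sign, which is permissible by local uniform convergence inside the strip, to obtain $H(s)=\int_0^\infty t^{s-1}h(t)\,dt=\dfrac{\pi^2}{4}\sec\!\left(\dfrac{\pi s}{2}\right)\tan\!\left(\dfrac{\pi s}{2}\right)$ for $-1<\Re(s)<1$.

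Next I would verify the hypotheses of \eqref{Par1} along the line $\Re(s)=d$, $1<d<2$: since $g(t)\sim 1/(xt)$ as $t\to 0^+$, the factor $t^{d-1}g(t)$ is integrable at the origin exactly because $d>1$, and the exponential decay of $g$ handles the tail, so $t^{d-1}g(t)\in L[0,\infty)$; moreover $H(1-d-it)=\dfrac{\pi^2}{4}\csc\!\left(\dfrac{\pi(d+it)}{2}\right)\cot\!\left(\dfrac{\pi(d+it)}{2}\right)$ decays like $e^{-\pi|t|/2}$ as $|t|\to\infty$, hence lies in $L(-\infty,\infty)$. Finally, $H(1-s)$ is analytic for $0<\Re(s)<2$ and $G(s)$ for $\Re(s)>1$, so their common strip is $1<\Re(s)<2$, which contains the line of integration. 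Applying \eqref{Par1} then yields $\int_0^\infty h(t)g(t)\,dt=\dfrac{1}{2\pi i}\int_{(d)}H(1-w)G(w)\,dw$.

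It remains to simplify $H(1-w)$. Using the co-function identities $\sec\!\left(\dfrac{\pi}{2}-\theta\right)=\csc\theta$ and $\tan\!\left(\dfrac{\pi}{2}-\theta\right)=\cot\theta$ with $\theta=\dfrac{\pi w}{2}$, one gets $H(1-w)=\dfrac{\pi^2}{4}\csc\!\left(\dfrac{\pi w}{2}\right)\cot\!\left(\dfrac{\pi w}{2}\right)$, and substituting this together with $G(w)=x^{-w}\Gamma(w)\zeta(w)$ into the previous display reproduces exactly the right-hand side of \eqref{lastsectionlemma}. The only slightly delicate bookkeeping point is choosing which of the two functions is $h$ and which is $g$: one must take $h(t)=t\log t/(1+t^2)$ (so that $H(1-s)$, rather than $H(s)$, enters Parseval) in order for the common strip to come out as $(1,2)$ — matching the hypothesis $1<d<2$ — rather than $(-1,0)$; beyond that, the argument is a routine combination of a beta integral, one differentiation in a parameter, and a trigonometric identity.
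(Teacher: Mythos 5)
Your proof is correct and follows essentially the same route as the paper: both rest on Parseval's formula \eqref{Par1} applied to the Mellin pair $x^{-w}\Gamma(w)\zeta(w)$ and the beta-type integral $\int_0^\infty t^{s}(1+t^2)^{-1}\,dt=\tfrac{\pi}{2}\sec(\pi s/2)$, with the logarithm produced by differentiation in a parameter. The only (harmless, and arguably slightly cleaner) difference is the order of operations: you differentiate the elementary beta integral in $s$ to obtain the Mellin transform of $t\log t/(1+t^2)$ and then apply Parseval once, whereas the paper first applies Parseval with an auxiliary exponent $t^{a}$ and then differentiates the resulting identity in $a$ at $a=1$, which requires justifying differentiation under the integral sign on both sides, including the contour integral.
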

\begin{proof}
	Let $a\in\mathbb{C}$ such that $0<\textup{Re}(a)<4$. From \cite[Equation (4.1), p. 182]{ober}, for $-\textup{Re}(a)<\textup{Re}(w)<2-\textup{Re}(a)$,
	\begin{align*}
		\int_{0}^{\infty} \frac{t^{w+a-1}}{1+t^2} \, dt = \frac{\pi}{2 \cos \left( \frac{\pi}{2} (w+a-1) \right)} =\frac{\pi}{2 \sin \left( \frac{\pi}{2} (w+a) \right)},
	\end{align*}
	whereas, for Re$(w)>1$ and $ x>0$,
	\begin{align*}
		\int_{0}^{\infty} \frac{t^{w-1}}{e^{xt}-1} \, dt = x^{-w} \Gamma(w) \zeta(w).
	\end{align*}
	Then by Parseval's formula \eqref{Par1}, for $\max(\textup{Re}(a)-1,1)<d=\textup{Re}(w)<\textup{Re}(a)+1$,
	\begin{align}
		\int_{0}^{\infty} \frac{t^a}{(1+t^2)(e^{xt}-1)} \, dt = \frac{1}{2\pi i} \int_{(d)} \frac{\pi x^{-w} \Gamma(w) \zeta(w)}{2 \sin \left( \frac{\pi}{2} (1-w+a) \right)}  \, dw = \frac{1}{2\pi i} \int_{(d)} \frac{\pi  \Gamma(w) \zeta(w)}{2 \cos \left( \frac{\pi}{2} (w-a) \right)} x^{-w} \, dw. \label{afterparseval}
	\end{align}
	We wish to differentiate extreme sides of \eqref{afterparseval} with respect to $a$. First consider the integral on the extreme left. Clearly, the integrand is a continuous function of both $t$ and $a$, and for a fixed $t$, it is an analytic function of $a$. Moreover, it is not difficult to see that the integral converges uniformly at both the limits in any compact subset of Re$(a)>0$. Thus, by \cite[p.~30-31, Theorem 2.3]{temme}, the integral on the extreme left of \eqref{afterparseval} is an analytic function of $a$ in Re$(a)>0$ and all of its derivatives can be obtained by differentiating under the integral sign. One can check that the same is true for the integral on the extreme right of \eqref{afterparseval}. Therefore, differentiating the extreme sides of \eqref{afterparseval} under the integral sign with respect to $a$, we have
	\begin{align}
		\int_{0}^{\infty} \frac{t^a \log(t)}{(1+t^2)(e^{xt}-1)} \, dt 
		&=-\frac{\pi^2}{4} \cdot \frac{1}{2\pi i} \int_{(d)} \Gamma(w) \zeta(w)\tan \left( \frac{\pi}{2} (w-a) \right) \sec \left( \frac{\pi}{2} (w-a) \right)  x^{-w}  \, dw. \label{afterparsevaldiff}
	\end{align}
	Now put $a=1$ in \eqref{afterparsevaldiff} to arrive at \eqref{lastsectionlemma}.
\end{proof}
We now give three representations for $\mathscr{H}(x)$ in the following theorem below.
	\begin{theorem}\label{repHtheorem}
	For $y>0$, let the function $\mathscr{H}(y)$ be defined in \eqref{H defn}. It admits alternate representations given below:
	\begin{align}
		\mathscr{H}(y) & =  -\frac{\pi^2}{4 y } + 4 \sum_{n=1}^{\infty} d(n) \int_{0}^{\infty} \frac{t \log(t) e^{-2\pi n y t}}{1+t^2}  \, dt \label{1identity}\\
		&= 2 \pi \int_{0}^{\infty} \left( \frac{1}{e^{2 \pi x} -1} - \frac{1}{2 \pi x}\right) \left(\log(x y)- \frac{1}{2} \left(\psi(i x y) + \psi(-i x y)\right)  \right)  \, dx \label{2identity}\\
		&=2 \pi \int_{0}^{\infty} \int_{0}^{\infty}\left( \frac{1}{e^{2 \pi x} -1} - \frac{1}{2 \pi x}\right)\left( \frac{1}{e^{t} -1} - \frac{1}{t}\right)\cos(xyt)\, dt\, dx.\label{2.5identity}
	\end{align}
\end{theorem}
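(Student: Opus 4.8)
The plan is to derive the three representations sequentially, using \eqref{H defn} as the starting point and invoking the asymmetric functional equation \eqref{asymmetric fe of zeta} to massage the integrand into a form amenable to Parseval's formula. For \eqref{1identity}, first I would use \eqref{asymmetric fe of zeta} to rewrite $\zeta(1-w)$ in terms of $\zeta(w)$, so that the Barnes-type integrand in \eqref{H defn} becomes a constant multiple of $\Gamma(w)\zeta(w)^2 \cot(\pi w/2)\,\textup{cosec}(\pi w/2)\,(\text{stuff})\,x^{-w}$; here one must carefully combine the $\pi/\sin(\pi w)$, the $\cos(\pi w/2)$ coming from the functional equation, and the $\pi\cot(\pi w/2)$ already present, using $\sin(\pi w)=2\sin(\pi w/2)\cos(\pi w/2)$. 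After shifting the line of integration to $1<d<2$ (picking up the residue at $w=1$, which should produce the $-\pi^2/(4y)$ term — this is where that term comes from), I would expand one factor of $\zeta(w)$ as a Dirichlet series and recognize $\zeta(w)^2 = \sum_{n\ge 1} d(n) n^{-w}$, where $d(n)$ is the divisor function. The remaining Mellin–Barnes integral $\frac{1}{2\pi i}\int_{(d)}\Gamma(w)\zeta(w)\cot(\pi w/2)\,\textup{cosec}(\pi w/2)(ny)^{-w}\,dw$ is exactly the object evaluated in Lemma \ref{lemma integral} (up to the substitution $x\mapsto 2\pi n y$ and the factor $\pi^2/4$), which converts it into $\int_0^\infty \frac{t\log t}{(1+t^2)(e^{2\pi n y t}-1)}\,dt$; expanding $1/(e^{2\pi n y t}-1)=\sum_{m\ge1}e^{-2\pi m n y t}$ and re-summing over $mn$ via $\sum_{N}d(N)$ gives \eqref{1identity}. (Alternatively, and perhaps more cleanly, one applies Parseval \eqref{Par1} directly to the $\Gamma(w)\zeta(w)^2\cdot(\text{trig})$ integrand after identifying the two Mellin transforms being paired.)

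For \eqref{2identity}, the idea is to split the trigonometric factor differently: write the integrand of \eqref{H defn} using \eqref{asymmetric fe of zeta} on $\zeta(1-w)$ but keep things set up so that Parseval's formula \eqref{Par2} applies with $h$ corresponding to the Mellin transform of $1/(e^{2\pi x}-1)-1/(2\pi x)$ (which is $(2\pi)^{-w}\Gamma(w)\zeta(w)$, valid in a suitable strip after analytic continuation) and $g$ corresponding to the Mellin transform of $\log(x) - \tfrac12(\psi(ix)+\psi(-ix))$. The latter Mellin transform should reduce, via the reflection-type identity $\psi(ix)+\psi(-ix) = 2\log x + \pi\cot(\cdot)$-style manipulations or via known integral representations, to something proportional to $\zeta(1-w)\cot(\pi w/2)$ or $\zeta(w)\cot(\pi w/2)$ — matching precisely what is left in \eqref{H defn}. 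Then \eqref{Par2} with $u=y$ yields $\int_0^\infty \frac1x h(1/x \cdot \text{something})g(x)\,dx$, which after a change of variable becomes the stated integral over $x$; one must check the common strip of analyticity and the $L^1$ hypotheses, which hold because of the exponential decay of $\Gamma$ on vertical lines via \eqref{Stirling_equn}.

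Finally, \eqref{2.5identity} should follow from \eqref{2identity} by inserting the classical integral representation $\log(xy) - \tfrac12(\psi(ixy)+\psi(-ixy)) = 2\int_0^\infty\left(\frac{1}{e^t-1}-\frac1t\right)\frac{\text{(something)}}{\text{(something)}}\,dt$ — more precisely, using Binet's / the Hermite-type formula $\tfrac12(\psi(iz)+\psi(-iz)) - \log z = -\int_0^\infty\left(\frac{1}{e^t-1}-\frac1t\right)\frac{2z^2}{z^2+t^2/(2\pi)^2}$-style kernels, or equivalently the known formula expressing $\textup{Re}\,\psi(iz)-\log|z|$ as a cosine integral against $\frac{1}{e^t-1}-\frac1t$. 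Substituting this into \eqref{2identity} and swapping the order of integration (justified by absolute convergence, since $\frac{1}{e^{2\pi x}-1}-\frac1{2\pi x} = O(x)$ near $0$ and decays exponentially, and likewise for the $t$-factor) produces the symmetric double integral with the $\cos(xyt)$ kernel.

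The main obstacle I anticipate is the bookkeeping in step one — correctly tracking all the trigonometric factors through the functional equation substitution so that $\frac{\pi\,\zeta(w)\zeta(1-w)}{\sin(\pi w)}\cdot\pi\cot(\pi w/2)$ collapses to the clean $\Gamma(w)\zeta(w)^2\cot(\pi w/2)\,\textup{cosec}(\pi w/2)$ form demanded by Lemma \ref{lemma integral}, together with pinning down the residue at $w=1$ that yields the $-\pi^2/(4y)$ term. The analytic-continuation and $L^1$-verification needed to legitimately apply Parseval in \eqref{2identity} is routine but must be stated; and identifying the exact Mellin transform of $\log x - \tfrac12(\psi(ix)+\psi(-ix))$ with the correct cotangent factor is the other place where a sign or a factor of $2$ could easily go wrong.
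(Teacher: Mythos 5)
Your plan matches the paper's proof in all essential respects: for \eqref{1identity} the same functional-equation rewrite, line shift picking up the residue $-\pi^2/(4y)$ at $w=1$, Dirichlet-series expansion and appeal to Lemma \ref{lemma integral} followed by resummation into $d(n)$; for \eqref{2identity} the same Parseval pairing (via \eqref{Par2}) of the Mellin transforms of $\tfrac{1}{e^{2\pi x}-1}-\tfrac{1}{2\pi x}$ and of $\log x-\tfrac12(\psi(ix)+\psi(-ix))$; and for \eqref{2.5identity} the same substitution of Wigert's cosine-integral formula. The only small imprecision is that after the substitution $w\mapsto 1-w$ the integrand of \eqref{H defn} factors as $\frac{\zeta(1-w)}{2\cos(\pi w/2)}\cdot\frac{\zeta(w)}{2\cos(\pi w/2)}$ (secants, not the cotangent factors you guessed), but this does not affect the validity of the approach.
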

\begin{proof}
Use the functional equation of $\zeta(w)$, stated in \eqref{asymmetric fe of zeta}, in the definition of $\mathscr{H}(y)$ given in \eqref{H defn}, and simplify to obtain for $0<c<1$,
	\begin{align*}
	\mathscr{H}(y) &= \pi^2 \cdot \frac{1}{2 \pi i} \int_{(c)} \Gamma(w) \zeta^2(w) \cot\left(\frac{\pi w }{2}\right)  \textup{cosec}\left(\frac{\pi w}{2}\right) (2\pi y)^{-w} \, dw \\
	&=  \pi^2 \cdot  \frac{1}{2 \pi i} \int_{(d)} \Gamma(w) \zeta^2(w) \cot\left(\frac{\pi w }{2}\right)  \textup{cosec}\left(\frac{\pi w}{2}\right) (2\pi y)^{-w} \, dw - \frac{\pi^2}{4y},
\end{align*}
where the last step follows from shifting the line of integration from Re$(s)=c$ to Re$(s)=d, 0<d<1$ and applying the residue theorem. Using the series definition of $\zeta(w)$, interchanging of the order of summation and integration because of absolute and uniform convergence, and then invoking \ref{lemma integral} in the next step, we see that
	\begin{align*}
	\mathscr{H}(y) &= -\frac{\pi^2}{4y} + \pi^2 \sum_{m=1}^{\infty} \frac{1}{2\pi i} \int_{(d)} \Gamma(w) \zeta(w) \cot\left(\frac{\pi w }{2}\right)  \textup{cosec}\left(\frac{\pi w}{2}\right) (2\pi m y)^{-w} \, dw \\
	&= -\frac{\pi^2}{4y} + 4  \sum_{m=1}^{\infty} \int_{0}^{\infty} \frac{t \log(t)}{(1+t^2)(e^{2 \pi m y t}-1)} \, dt \\
	&= -\frac{\pi^2}{4 y} + 4 \sum_{m=1}^{\infty} \int_{0}^{\infty} \frac{t \log(t)}{1+t^2} \sum_{k=1}^{\infty} e^{-2\pi mk y t} \, dt \\
	&= -\frac{\pi^2}{4 y } + 4 \sum_{n=1}^{\infty} d(n) \int_{0}^{\infty} \frac{t \log(t) e^{-2\pi n y t}}{1+t^2}  \, dt, 
\end{align*}
which gives the first representation \eqref{1identity}. To derive the second representation in \eqref{2identity}, we first replace $w$ by $1-w$ and simplify to obtain for $0<c=$Re$(w)<1$,
	\begin{align}
	\mathscr{H}(y)=\frac{2 \pi^2}{ y} \cdot \frac{1}{2 \pi i} \int_{(c)} \frac{ \zeta(w)  \zeta(1-w)}{4 \cos^2\left(\frac{\pi w}{2}\right)}  \left(\frac{1}{y}\right)^{-w} \, dw \label{doublesign}.
\end{align}
Now
	\begin{align*}
	\frac{1}{2 \pi i } \int_{(d)}  \Gamma(w) \zeta(w) x^{-w} \, dw =\frac{1}{e^{x}-1},
\end{align*}
which, upon using \eqref{asymmetric fe of zeta}, leads to
\begin{align*}
	\frac{1}{2 \pi i } \int_{(d)}  \frac{\zeta(1-w)}{2\cos\left( \frac{\pi w}{2} \right)} \left(\frac{x}{2\pi}\right)^{-w} \, dw  =\frac{1}{e^{x}-1}.
\end{align*}
Next, moving the line of integration from Re$(s)=d$ to Re$(s)=c$, where $0<c<1$, and then replacing $x$ by $2\pi x$, we have
	\begin{align}\label{P1}
	\frac{1}{2 \pi i } \int_{(c)} \frac{ \zeta(1-w)  }{2 \cos\left(\frac{\pi w}{2}\right)}  {x}^{-w} \, dw =  \frac{1}{e^{2\pi x} -1} -\frac{1}{2 \pi x}.
\end{align}
Now, from \cite[Lemma 4.1]{dgkm}\footnote{The argument in the proof of this lemma in \cite{dgkm} needs to be modified a bit in that instead of applying the commonly used Parseval's formula, that is, \eqref{Par2}, one has to resort to Vu Kim Tuan's extension of Parseval's formula; see \cite{vu} (especially Corollary 1) as well as \cite[pp.~15--17]{yakubovich}.}, 
we have, for $1<d'<2$ and Re$(x)>0$,
\begin{align*}
	\frac{1}{2 \pi i } \int_{(d')} \frac{ \Gamma(w)  }{\tan\left(\frac{\pi w}{2}\right)}  x^{-w} \, dw &=\frac{2}{\pi} \int_{0}^{\infty} \frac{t \cos(t)  }{x^2+t^2} \, dt. 
\end{align*}
Replace $x$ by $nx$ and sum both sides over $n \in \mathbb{N}$ to get
\begin{align}
	\frac{1}{2 \pi i } \int_{(d')} \frac{ \Gamma(w) \zeta(w) }{\tan\left(\frac{\pi w}{2}\right)}  x^{-w} \, dw &= \sum_{n=1}^{\infty}  \frac{2}{\pi} \int_{0}^{\infty} \frac{t \cos(t)}{n^2x^2+t^2} \, dt.  \label{mellin 2 half}
\end{align}
Use the functional equation of $\zeta(w)$ \eqref{asymmetric fe of zeta} in \eqref{mellin 2 half} and then replace $x$ by $2\pi x$ to obtain
	\begin{align*}
	\frac{1}{2 \pi i } \int_{(d')} \frac{ \zeta(1-w) }{2\sin\left(\frac{\pi w}{2}\right)} x^{-w} \, dw &= \frac{2}{\pi} \sum_{n=1}^{\infty}   \int_{0}^{\infty} \frac{t \cos(t)  }{4\pi^2 n^2x^2+t^2} \, dt.
\end{align*}
Move the line of integration to Re$(w)=c'$, where $0<c'<1$, and note that this process does not introduce any pole. Now employ the change of variable $w=1-s$ in the resulting integral so that for $0<c=\textup{Re}(s)<1$,
	\begin{align}
	\frac{1}{2 \pi i } \int_{(c)} \frac{ \zeta(s)  }{2 \cos\left(\frac{\pi s}{2}\right)}  x^{s} \, ds  &= \frac{2x}{\pi} \sum_{n=1}^{\infty} \int_{0}^{\infty} \frac{t \cos(t)  }{4\pi^2n^2 x^2+t^2} \, dt \\
	&= \frac{x}{\pi}  \left(\log(x )- \frac{1}{2} \left(\psi(i x ) + \psi(-i x )\right)  \right) \label{firstrep almost},
\end{align}
where, in the last step, we used  \cite[Theorem 2.2]{dgkm}
	\begin{align*}
	\sum_{m=1}^{\infty} \int_{0}^{\infty} \frac{t \cos(t)  }{t^2+m^2 u^2} \, dt = \frac{1}{2}  \left\{ \log\left(\frac{u}{2\pi}\right)- \frac{1}{2} \left(\psi\left(\frac{iu}{2\pi}\right) + \psi\left(-\frac{iu}{2\pi}\right)\right)  \right\},
\end{align*} 
with $u$ replaced by $2\pi x$. 
Replacing $x$ by $1/x$ in \eqref{firstrep almost} so that
\begin{align}
	\frac{1}{2 \pi i  } \int_{(c)} \frac{ \zeta(w)  }{2 \cos\left(\frac{\pi w}{2}\right)}  x^{-w} \, dw
	= \frac{1}{\pi x}  \left\{-\log(x)- \frac{1}{2} \left(\psi\left(\frac{i}{x}\right) + \psi\left(-\frac{i}{x}\right)\right)  \right\}.\label{P2}
\end{align}
We now apply Parseval's formula in the form \eqref{Par2} with $g(x)$ and $h(x)$ being the right-hand sides of \eqref{P1} and \eqref{P2} respectively so as to transform the integral on the right-hand side of \eqref{doublesign} to the integral in \eqref{2identity}.

We now derive the third representation for $\mathscr{H}(x)$ in \eqref{2.5identity} from the one in \eqref{2identity}. To that end, we note a result due to Wigert \cite[p.~203, Equation (3)]{wig0}, namely, for $a>0$,
\begin{align}\label{wigert equation}
	\int_{0}^{\infty}\left(\frac{1}{e^t-1}-\frac{1}{t}\right)\cos(at)\, dt = \log(a)-\frac{1}{2}\left(\psi(ia)+\psi(-ia)\right).
\end{align}
Now replace $\log(xy)-\frac{1}{2}\left(\psi(ixy)+\psi(-ixy)\right)$ occurring in \eqref{2identity} by the left-hand side of \eqref{wigert equation} with $a$ replaced by $xy$. This proves \eqref{2.5identity}.
\end{proof}

\section{Concluding remarks}
The Mellin transform of the auto-correlation function $A(x)$ defined in \eqref{auto-correlation function} is \cite[p.~1426]{darses-najnudel} $\displaystyle\frac{\pi \zeta(s)\zeta(1-s)}{\sin(\pi s)}$, which suggests that the function $\mathscr{H}$ defined in \eqref{H defn} should be related to the function $A(x)$. Another connection is evident through the integral representation of $\mathscr{H}(x)$ given in  \eqref{1identity} and the relation of $A(x)$ with the series $\sum_{n=1}^{\infty}d(n)e^{-2\pi nx}$ given in \cite[Section 2.2]{darses-najnudel}. \\
\begin{center}
\textbf{Acknowledgements}
\end{center}
The first author is supported by the Swarnajayanti Fellowship grant SB/SJF/2021-22/08 of ANRF (Government of India). The second author was an Institute Postdoc supported by the grant MIS/IITGN/R\&D/MATH/AD/2324/058 of IIT Gandhinagar. Both the authors sincerely thank the respective funding agencies for their support.

\end{document}